\newtheorem{theorem}{Теорема}[section]
\newtheorem{corollary}[theorem]{Следствие}
\newtheorem{lemma}[theorem]{Лемма}
\newtheorem{proposition}[theorem]{Предложение}
\newtheorem{notation}[theorem]{Обозначение}
\newtheorem{remark}[theorem]{Замечание}
\newtheorem{ques}[theorem]{Вопрос}
\newtheorem{example}[theorem]{Пример}
\newtheorem{definitionhead}[theorem]{Определение}
\newenvironment{definition}{\begin{definitionhead}%
\sl}{\end{definitionhead}}
\newtheorem{proof}{Доказательство.}
\begin{document}


\title{
\ \hbox to \textwidth{\normalsize
УДК 512.54+512.57+512.562+519.1\hfill}\\[1ex]
Оценки на количество перестановочно упорядоченных множеств\footnote{работа выполнена при частичной финансовой поддержке Фонда Саймонса}}
\author{
М.~И.~Харитонов}

\maketitle

\begin{abstract}
В работе доказывается, что $\varepsilon_k(n)$ -- количество $n$-элементных перестановочно упорядоченных (т. е. размерности 2) множеств с максимальной антицепью длины $k$ не больше, чем $\min\{{k^{2n}\over (k!)^2}, {(n-k+1)^{2n}\over ((n-k)!)^2}\}$. Также доказывается, что $\xi_k(n)$ -- количество перестановок чисел от 1 до $n$ с максимальной убывающей  подпоследовательностью длины не больше $k$ -- не больше, чем ${k^{2n}\over ((k-1)!)^2}.$ Проводится обзор биекций и связей между парами линейных порядков, парами диаграмм Юнга, целочисленными двумерными массивами и целочисленными
матрицами на основе работ \cite{Sch61}, \cite{Kn70}. Указана полученная в работе \cite{Ges90} производящая функция для количества полилинейных слов длины $n$ над $l$-буквенным алфавитом $(n \leqslant l)$, в каждом из которых не найдётся последовательности из $(k+1)$ буквы в
порядке лексикографического убывания.
\end{abstract}


{\bf Keywords:} комбинаторика слов, $k$-разбиваемость, теорема Дилуорса, полилинейные слова, полилинейные тождества, диаграммы Юнга.

\section{Введение и основные понятия
}

В работе оценивается количество перестановочно упорядоченных множеств.

\begin{definition}
Частично упорядоченное множество $M$ называется {\em перестановочно упорядоченным,} если порядок на нём есть пересечение двух линейных порядков.
\end{definition}

Рассмотрим теперь некоторую перестановку $\pi$ элементов $1, 2,\dots , n$ (иначе говоря, $\pi\in S_n$). Определим понятие $k$-разбиваемости.

\begin{definition}
Пусть для перестановки $\pi\in S_n$ найдётся последовательность натуральных чисел $1\leqslant i_1\leqslant i_2\leqslant\dots\leqslant i_k$ таких, что $\pi(i_1)\geqslant\pi(i_2)\geqslant\dots\geqslant\pi(i_k).$ Тогда перестановка $\pi(1)\pi(2)\dots\pi(n)$ называется $k$-разбиваемой.
\end{definition}

\begin{example}
Количество не 3-разбиваемых перестановок в группе $S_n$ есть $n$-е число Каталана и равно ${(2n)!\over n!(n+1)!}$.
\end{example}

\begin{proposition}
Если слово является $k$-разбиваемым, то для любого $m<k$ оно также является $m$-разбиваемым.
\end{proposition}

Далее нам потребуется определение диаграммы Юнга.

\begin{definition}
{\em (Стандартной) диаграммой Юнга порядка $n$} называется таблица, в ячейках которой написаны $n$ различных натуральных чисел, причём суммы чисел в каждой строке и каждом столбце возрастают, между числами нет пустых ячеек и есть элемент, который содержится и в первой строке, и в первом столбце.
\end{definition}

\begin{definition}
Диаграмма Юнга называется {\em диаграммой формы} $p = (p_1, p_2,\dots , p_m),$ если у неё $m$ строк и $i$-я строка имеет длину $p_i.$
\end{definition}

Формы диаграмм Юнга пробегают все возможные разбиения на циклы элементов симметрической группы $S_n$. Любой класс сопряжённости группы $S_n$ задаётся некоторым разбиением на циклы. Каждому классу сопряжённости группы соответствует некоторое её неприводимое представление. Следовательно, форма диаграммы Юнга соответствует неприводимому представлению группы $S_n$.

Пронумеруем все клетки диаграммы Юнга формы $p$ числами от 1 до $n$. Пусть $h_k$ -- количество клеток диаграммы Юнга, расположенных
\begin{itemize}
    \item либо в одной строке, либо в одном столбце с клеткой с номером $k$,
    \item находящихся не левее или не выше клетки с номером $k$.
\end{itemize}
Тогда число диаграмм Юнга формы $p$ и равная ему размерность соответствующего неприводимого представления группы $S_n$, вычисляются по ``формуле крюков'' ${n!\over\prod\limits_{k=1}^n h_k}$.

В работе \cite{Sch61} приведена биекция между перестановками $\pi$ чисел $1, 2,\dots , n$ и заполненных теми же числами парами диаграмм Юнга $(P, Q)$. Эта биекция и её следствия будут разобраны в главе \ref{sec:main}.

{\it В нашей работе мы доказываем следующие результаты:}

\begin{theorem}\label{th:main}
$\xi_k(n)$ -- количество не $(k+1)$-разбиваемых перестановок $\pi\in S_n$ -- не больше, чем ${k^{2n}\over ((k-1)!)^2}$.
\end{theorem}

\begin{theorem}\label{th:main2}
$\varepsilon_k(n)$ -- количество $n$-элементных перестановочно упорядоченных множеств с максимальной антицепью длины $k$ -- не больше, чем $\min\{{k^{2n}\over (k!)^2}, {(n-k+1)^{2n}\over ((n-k)!)^2}\}$.
\end{theorem}

\begin{corollary}\label{cor:main}
Пусть $\digamma$ является множеством слов алфавита из $l$ букв с введённым на них лексикографическим порядком. Назовём {\em полилинейным} слово, все буквы которого различны.  Назовём слово {\em $k$-разбиваемым,} если в нём найдутся $k$ непересекающихся подслов, идущих в порядке лексикографического убывания. Тогда количество полилинейных слов длины $n$ $(n\leqslant l)$, не являющихся $(k+1)$-разбиваемыми, не больше, чем ${l!k^{2n}\over n!(l-n)!((k-1)!)^2}$.
\end{corollary}

Оценка в теореме \ref{th:main} улучшает полученную в работе \cite{Lat72}. Следует сказать, что оценка на $\xi_k(n)$ в работе \cite{Lat72} была получена для доказательства теоремы Регева, вопрос же о её точности не ставился. Оценка в работе \cite{Lat72} доказывается с помощью теоремы Дилуорса. Применение теоремы Дилуорса в некоторых других задачах комбинаторики слов описано в работе \cite{BK12}.

В работе \cite{Ch07} доказывается, что для для определённой функции $K(n) = o(\sqrt[3]{n}\ln n)$ и числа  $k \leqslant K(n) = o(\sqrt[3]{n}\ln n)$ верна асимптотическая оценка $\xi_k(n) = k^{2n - o(n)}$.

Для получения производящей функции в работе \cite{Kn70} введено следующее понятие:

\begin{definition}
Обобщённой диаграммой Юнга формы $(p_1, p_2,\dots  p_m)$, где $p_1\geqslant p_2\geqslant\dots\geqslant p_m\geqslant 1$, называется массив $Y$ положительных чисел $y_{ij}$, где $1\leqslant j\leqslant p_i$, $1\leqslant i\leqslant m,$ такой, что числа в его строках не убывают, а в столбцах возрастают.
\end{definition}

Ещё требуются двухстрочные массивы следующего типа.

\begin{definition}
Набор пар положительных чисел $(u_1, v_1), (u_2, v_2),\dots , (u_N, v_N)$ такой, что пары $(u_k, v_k)$ расположены в неубывающем лексикографическом порядке, называется {\em набором типа $\alpha(N)$}.
\end{definition}
В работе \cite{Kn70} устанавливается биекция между наборами типа $\alpha(N)$ и парами $(P, Q)$ обобщённых диаграмм Юнга порядка $N$ (т. е. состоящих из $N$ ячеек). Кроме того, существует взаимно-однозначное соответствие между рассматриваемыми наборами и матрицами, в которых число в ячейке из $i$-ой строки и $j$-го столбца равно количеству пар $(i, j)$ в наборе.
В работе \cite{Ges90} на основании функций Шура $s_\lambda$, которые также являются производящими функциями для обобщённых диаграмм Юнга, строится производящая функция для $\xi_k(n).$ Однако сложность построения явной формулы для $\xi_k(n)$ растёт экспоненциально по $k$. К примеру, $\xi_3(n)= 2 \sum\limits_{ k=0}^n \bigl(\begin{smallmatrix}2k\\ k \end{smallmatrix}\bigr)\bigl(\begin{smallmatrix}n\\ k \end{smallmatrix}\bigr)^2 {3k^2 + 2k + 1 - n - 2kn\over (k + 1)^2(k + 2)(n - k + 1)}$.

\section{Алгебраические обобщения}

В 1950 году Шпехт (см. \cite{Sp50}) поставил проблему существования бесконечно базируемого многообразия ассоциативных алгебр над полем характеристики 0. Решение проблемы Шпехта для нематричного случая представлено в докторской диссертации В. Н. Латышева \cite{Lat77}. Рассуждения В. Н. Латышева основывались на применении техники частично упорядоченных множеств. А. Р. Кемер (см. \cite{Kem87} доказал, что каждое многообразие ассоциативных алгебр конечно базируемо, тем самым решив проблему Шпехта.

Первые примеры бесконечно базируемых ассоциативных колец были получены А. Я. Беловым (см. \cite{Bel99}), А. В. Гришиным (см. \cite{Gr99}) и В. В. Щиголевым (см. \cite{Shch99}).

После решения проблемы Шпехта в случае характеристики 0 актуален вопрос, поставленный Латышевым.

Введём некоторый порядок на словах алгебры над полем. Назовём {\it обструкцией} полилинейное слово, которое
\begin{itemize}
    \item является уменьшаемым (т. е. является комбинацией меньших слов);
    \item не имеет уменьшаемых подслов;
    \item не является изотонным образом уменьшаемого слова меньшей длины.
\end{itemize}

\begin{ques}[Латышев]
Верно ли, что количество обструкций для полилинейного $T$-идеала конечно?
\end{ques}

Из проблемы Латышева вытекает полилинейный случай проблемы конечной базируемости для алгебр над полем конечной характеристики. Наиболее важной обструкцией является обструкция $x_n x_{ n-1}\dots x_1$, её изотонные образы составляют множество не $n$-разбиваемых слов.

В связи с этими вопросами возникает проблема:

\begin{ques}
Перечислить количество полилинейных слов, отвечающих данному конечному набору обструкций. Доказать элементарность соответствующей производящей функции.
\end{ques}

\section{Доказательство основных результатов}\label{sec:main}

\begin{lemma}[\cite{Sch61}]\label{lem:schlem3}
Существует взаимооднозначное соответствие между перестановками $\pi\in S_n$ и парами $(P, Q)$ стандартных диаграмм Юнга, заполненных числами от 1 до $n$ и такими, что форма $P$ совпадает с формой $Q$.
\end{lemma}

\begin{proof}
Пусть $\pi = x_1 x_2\dots x_n$. Построим по ней пару диаграмм Юнга $(P, Q)$. Сначала построим диаграмму $P$.

Определим операцию $S\leftarrow x$, где $S$ -- диаграмма Юнга, $x$ -- натуральное число, не равное ни одному из чисел в диаграмме $S$.
\begin{enumerate}
\item Если $x$ не меньше самого правого числа в первой строке $S$ (если в ней нет чисел, то будем считать, что $x$ больше любого из них), то добавляем $x$ в конец первой строки диаграммы $S$. Полученная диаграмма $S\leftarrow x$.
\item Если найдётся большее, чем $x$, число в первой строке $S$, то пусть $y$ -- наименьшее число в первой строке, такое что $y > x$. Тогда заменим $y$ на $x$. Далее проводим с $y$ и второй строкой те же действия, что проводили с $x$ и первой строкой.
\item Продолжаем этот процесс строка за строкой, пока какое-нибудь число не будет добавлено в конец строки.
\end{enumerate}
Из построения $S\leftarrow x$ получаем, что вновь полученная таблица будет диаграммой Юнга.

Пусть $P = (\dots ((x_1\leftarrow x_2)\leftarrow x_3)\dots \leftarrow x_n).$ Тогда $P$ является диаграммой Юнга и соответствует перестановке $\pi$. Пусть диаграмма $Q$ получается из диаграммы $P$ путём замены $x_i$ на $i$ для всех $i$ от 1 до $n$. Тогда $Q$ также является диаграммой Юнга.

Далее в работе \cite{Sch61} показывается, что приведённое построение пар диаграмм Юнга $(P, Q)$ по перестановкам $\pi\in S_n$ взаимнооднозначно.
\end{proof}

Из алгоритма, приведённого в доказательстве леммы \ref{lem:schlem3} следует

\begin{lemma}[\cite{Sch61}]\label{lem:schth1}
Количество строк в диаграмме $P$ равно длине максимальной убывающей подпоследовательности символов в $\pi = x_1 x_2\dots x_n$.
\end{lemma}

Приступим теперь непосредственно к доказательству теоремы \ref{th:main}.

Рассмотрим перестановку $\pi = x_1 x_2\dots x_n$. Она не $(k+1)$-разбиваема тогда и только тогда, когда в соответствующих ей диаграммах $P$ и $Q$ не больше $k$ строк.

Покрасим числа от 1 до $n$ в $k$ цветов произвольным образом. Таких раскрасок $k^n$. Рассмотрим теперь таблицы (не Юнга!), построенные следующим образом. Теперь для каждого $i$ от 1 до $k$ поместим в $i$-ю строку строимой таблицы числа $i$-го цвета в возрастающем порядке так, чтобы наименьшее число в строке стояло в первом столбце и между числами в одной строке не было пустых ячеек (но целиком пустые строки быть могут). Назовём полученные таблицы {\it таблицами типа $\beta (n, k)$}. Между раскрасками в $k$ цветов чисел от 1 до $n$ и таблицами типа $\beta(n, k)$ есть естественная биекция, следовательно, таблиц типа $\beta(n, k)$ будет ровно $k^n$. Заметим, что любая диаграмма Юнга, заполненная числами от 1 до $n$ с не более, чем $k$ строками, будет таблицей типа $\beta(n, k)$. Будем считать, что таблицы $A$ и $B$ типа $\beta(n, k)$ эквивалентны ($A\sim_\beta B$), если одну из другой можно получить при помощи перестановки строк. Тогда если в таблице типа $\beta(n, k)$ не больше одной пустой строки, то в соответствующем классе эквивалентности будет ровно $k!$ элементов. Так как в диаграммах Юнга числа в столбцах строго упорядочены по возрастанию, то в каждом классе эквивалентности таблиц типа $\beta(n, k)$ будет не более одной диаграммы Юнга. Если в диаграмме Юнга ровно $k$ строк, то в соответствующей таблице типа $\beta(n, k)$ не будет пустых строк. Следовательно, диаграмм Юнга, заполненных числами от 1 до $n$ и имеющих ровно $k$ строк, не больше, чем ${k^n\over k!}$.

Если в диаграмме Юнга $k$ строк, то в ней не больше, чем $(n-k+1)$ столбец. Раскрасим числа от 1 до $n$ в $(n-k+1)$ цвет. Рассмотрим теперь таблицы (не Юнга!), построенные следующим образом. Теперь для каждого $i$ от 1 до $(n-k+1)$ поместим в $i$-й столбец строимой таблицы числа $i$-го цвета в возрастающем порядке так, чтобы наименьшее число в столбце стояло в первой строке и между числами в одном столбце не было пустых ячеек (но целиком пустые стобцы быть могут). Назовём полученные таблицы {\it таблицами типа $\gamma (n, k)$}. Между раскрасками в $(n-k+1)$ цветов чисел от 1 до $n$ и таблицами типа $\gamma (n, k)$ есть естественная биекция, следовательно, таблиц типа $\gamma (n, k)$ будет ровно $k^n$. Заметим, что любая диаграмма Юнга, заполненная числами от 1 до $n$ с $k$ строками, будет таблицей типа $\gamma (n, k)$. Будем считать, что таблицы $A$ и $B$ типа $\gamma (n, k)$ эквивалентны ($A\sim_\gamma B$), если одну из другой можно получить при помощи перестановки столбцов. Пусть в таблице $A$ ровно $t$ ненулевых столбцов. Всего таблиц типа $\gamma (n, k)$ с $t$ ненулевыми строками будет не более, чем таблиц типа $\gamma (n, n-t+1)$, т. е. не более $t^n$. В классе эквивалентности таблицы типа $\gamma (n, k)$ с $t$ непустых столбцов будет $(\min\{t + 1, n-k+1\})!$ элементов. При этом таблиц с $(n-k)$ или $(n-k+1)$ столбцов будет не более $(n-k+1)^n$ и в каждом классе эквивалентности среди них будет $(n-k+1)!$ элементов. Так как в диаграммах Юнга числа в строках строго упорядочены по возрастанию, то в каждом классе эквивалентности таблиц типа $\gamma (n, k)$ будет не более одной диаграммы Юнга. Следовательно, среди таблиц типа $\gamma (n, k)$ будет не более ${(n-k+1)^n\over (n-k+1)!} + \sum\limits_{ t=1}^{n-k-1}{t^n\over t!}\leqslant {(n-k+1)^n\over (n-k)!}$ диаграмм Юнга.

Значит, пар диаграмм Юнга, в каждой из которых по $k$ строк, не больше, чем $\min\{{(n-k+1)^{2n}\over ((n-k)!)^2}, {k^{2n}\over (k!)^2}\}$. Следовательно, существует не больше $\min\{{(n-k+1)^{2n}\over ((n-k)!)^2}, {k^{2n}\over (k!)^2}\}$ перестановок $\pi\in S_n$ с длиной максимальной убывающей подпоследовательности ровно $k$.

Каждая перестановка соотвестсвует с точностью до изоморфизма паре линейных порядков из $n$ элементов. Порядок в перестановочно упорядоченном множестве есть пересечение двух линейных порядков. Так как у каждой пары линейных порядков ровно одно их пересечение, то по леммам \ref{lem:schlem3} и \ref{lem:schth1} количество перестановочно упорядоченных множеств порядка $n$ с максимальной антицепью длины $k$  не больше, чем $\min\{{(n-k+1)^{2n}\over ((n-k)!)^2}, {k^{2n}\over (k!)^2}\}$. Тем самым теорема \ref{th:main2} доказана.

\begin{remark}
Отметим, что по перестановочно упорядоченному множеству не всегда можно определить, какой именно парой линейных порядков оно порождено. Например, рассмотрим множество $\{ p_i \}_{ i=1}^{15}$ с порядком $(p_1>p_2>p_3, p_4>p_5>\dots >p_8, p_9>\dots >p_{15})$. Оно могло быть порождено:
\begin{itemize}
\item парой линейных порядков с соотношениями $(p_3>p_4, p_8>p_9)$ и $(p_3<p_4, p_8<p_9)$,
\item парой линейных порядков с соотношениями $(p_3>p_9, p_15>p_1)$ и $(p_3<p_9, p_15<p_1)$.
\end{itemize}

Эти 2 пары линейных порядков не изоморфны друг другу.
\end{remark}

Оценим $\beth_k(n)$ -- количество диаграмм Юнга, заполненных числами от 1 до $n$ и имеющих не больше $k$ строк.

\begin{lemma}
Верно неравенство $\beth_k(n)\leqslant{k^n\over (k-1)!}$.
\end{lemma}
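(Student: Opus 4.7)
The plan is to reuse the $\beta(n,k)$-construction introduced in the proof of Theorem~\ref{th:main2}, but now summing over all standard Young tableaux with at most $k$ rows rather than over those with exactly $k$ rows. I would take the $k^n$ colorings of $\{1,2,\ldots,n\}$ by $k$ colors, form the $\beta(n,k)$-diagram attached to each, and group the diagrams by the equivalence $\sim_\beta$ of color permutation. Every standard Young tableau with at most $k$ rows arises as the sorted-by-length shape of a unique $\sim_\beta$-class, so $\beth_k(n)$ equals the number of such classes.

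The key computation is a lower bound on the size of each class. For a tableau $T$ with exactly $j$ nonempty rows ($1\leqslant j\leqslant k$), the colorings in the class of $T$ are the $\binom{k}{j}\,j!=k!/(k-j)!$ ways of choosing which $j$ of the $k$ colors to use and distributing them among the rows of $T$. Summing over all $T$ gives the identity $k^n=\sum_{j=1}^{k}f_j\cdot k!/(k-j)!$, where $f_j$ is the number of standard Young tableaux of size $n$ with exactly $j$ rows. Thus the lemma reduces to the inequality $k!/(k-j)!\geqslant(k-1)!$ for each $j$, equivalently $k\geqslant (k-j)!$.

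For $j$ close to $k$ the inequality $k\geqslant(k-j)!$ is immediate (at $j=k$ one recovers the bound used already in Theorem~\ref{th:main2}), so the main obstacle is the case of small $j$. In particular, for $j=1$ the unique one-row tableau produces a $\sim_\beta$-class of size only $k$, which is smaller than $(k-1)!$ as soon as $k\geqslant 4$. The hard part of the argument, which I would carry out by a weighted rearrangement of the identity $\sum_j f_j\,k!/(k-j)!=k^n$, is to show that the surplus coming from the numerous tableaux with $j$ close to $k$ (where $k!/(k-j)!$ dwarfs $(k-1)!$) outweighs the deficit coming from the few tableaux with $j$ close to $1$; combined with the elementary observation that $f_1=1$, this should close the gap and yield $(k-1)!\,\beth_k(n)\leqslant k^n$. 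I expect this combinatorial balance to be the most delicate step.
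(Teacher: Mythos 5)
Your setup is the same as the paper's (the $\beta(n,k)$ colouring construction and the class size $k!/(k-j)!$ for a tableau with $j$ nonempty rows), and you correctly locate the obstruction: tableaux with few rows sit in small $\sim_\beta$-classes, so the single relation $\sum_{j} f_j\,k!/(k-j)!\leqslant k^n$ does not by itself give $\sum_j f_j\leqslant k^n/(k-1)!$. But the step you defer --- the ``weighted rearrangement'' showing that the surplus from large $j$ covers the deficit from small $j$ --- is exactly the content of the lemma, and it is not carried out; worse, the route you sketch cannot be completed from that one identity together with $f_1=1$. The deficit is not confined to $j=1$: for $j=2$ the class size is only $k(k-1)$, which is already below $(k-1)!$ once $k\geqslant 5$, and the identity gives only $f_2\leqslant k^n/(k(k-1))$, which by itself already exceeds the target $k^n/(k-1)!$. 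So no rearrangement of the single identity for the fixed $k$ closes the gap; you need independent upper bounds on $f_2,f_3,\dots$

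The missing idea, which is how the paper proceeds, is induction on $k$. Tableaux with exactly $k$ or $k-1$ rows lie in classes of full size $k!$, hence number at most $k^n/k!$; a tableau with $i\leqslant k-2$ rows is not estimated inside the $k$-colour model at all, but via the same construction with only $i$ colours, where the inductive hypothesis gives the much stronger bound $i^n/(i-1)!$. Summing, $\beth_k(n)\leqslant k^n/k!+\sum_{i=1}^{k-2}i^n/(i-1)!\leqslant k^n/(k-1)!$. In other words, what your ``combinatorial balance'' would have to produce is precisely this family of bounds for smaller numbers of rows, and obtaining them is what forces the induction. (A smaller inaccuracy: not every $\sim_\beta$-class contains a Young tableau, since sorting the rows of a diagram by length can violate the column condition, so $\beth_k(n)$ is only bounded above by, not equal to, the number of classes; this does not affect the direction of your estimate.)
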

\begin{proof}
Как показывалось ранее, если в таблице типа $\beta(n, k)$ не больше одной пустой строки, то в соответствующем классе эквивалентности будет ровно $k!$ элементов. Следовательно, диаграмм Юнга, заполненных числами от 1 до $n$ и имеющих либо $(k-1)$, либо $k$ строк, не больше ${k^n\over k!}$. Значит, для $k < 3$ лемма доказана. Пусть она доказана для $k<t$. Тогда для $k=t$ имеем $\beth_k(n)\leqslant {k^n\over k!}+\sum\limits_{i=1}^{ k-2}{i^n\over (i-1)!}\leqslant {k^n\over (k-1)!}.$
\end{proof}

Значит, пар диаграмм Юнга порядка $n$, в каждой из которых по  $\leqslant k$ строк, не больше, чем ${k^{2n}\over ((k-1)!)^2}$. Следовательно, по леммам \ref{lem:schlem3} и \ref{lem:schth1} количество не $(k+1)$-разбиваемых перестановок $\pi\in S_n$ меньше ${k^{2n}\over ((k-1)!)^2}$. Тем самым, теорема \ref{th:main} доказана.

Выведем из теоремы \ref{th:main} следствие \ref{cor:main}. Для каждого набора букв $a_{ i_1}, a_{ i_2},\dots , a_{ i_n}$ количество не $(k+1)$-разбиваемых полилинейных слов длины $n$, составленных из этого набора букв, не больше, чем ${k^{2n}\over ((k-1)!)^2}$. Каждому полилинейному слову отвечает ровно один набор из $n$ букв. Так как наборов из $n$ букв ровно $\bigl(\begin{smallmatrix}l\\ n \end{smallmatrix}\bigr)$, то количество не  $(k+1)$-разбиваемых полилинейных слов длины $n$ не больше, чем ${l!k^{2n}\over n!(l-n)!((k-1)!)^2}.$ Тем самым, следствие \ref{cor:main} доказано.

\section{Обобщенные диаграммы Юнга и их производящие функции}

\begin{lemma}[\cite{Kn70}]\label{lem:knuth}
Существует взаимооднозначное соответствие между наборами типа $\alpha(N)$  и парами $(P, Q)$ обобщённых диаграмм Юнга порядка $N$ у которых форма $P$ совпадает с формой $Q$.
\end{lemma}

\begin{proof}
Определим операцию $S\leftarrow x$, где $S$ -- обобщённая диаграмма Юнга, $x$ -- натуральное число, так же, как в доказательстве леммы \ref{lem:schlem3}. Сопоставим некоторому набору типа $\alpha(N)$ из пар $(u_1, v_1), (u_2, v_2),\dots (u_N, v_N)$ диаграмму Юнга $P = (\dots ((v_1\leftarrow v_2)\leftarrow v_3)\dots \leftarrow v_N)$. Пусть диаграмма $Q$ получается из диаграммы $P$ путём замены $v_i$ на $u_i$ для всех $i$ от 1 до $N$. Тогда $Q$ также является диаграммой Юнга.

Далее в работе \cite{Kn70} показывается, что приведённое построение пар обобщённых диаграмм Юнга $(P, Q)$ по наборам типа $\alpha$ взаимнооднозначно.
\end{proof}

\begin{notation}
Перестановка $\pi\in S_n$ является набором типа $\alpha(n)$ из пар//
 $(1, \pi(1)),\dots ,(n, \pi(n))$.
\end{notation}

{\it Симметрические функции.}

Здесь и далее считаем, что множество индексов при переменных симметрический функций является множеством натуральных чисел.

Напомним несколько понятий из теории симметрических функций.

{\it Полная симметрическая функция} $h_n$ равна $h_n = \sum\limits_{i_1\leqslant i_2\leqslant\dots\leqslant i_n}x_{i_1}x_{i_2}\dots x_{i_n}.$

Пусть $\lambda$ -- набор $(\lambda_1, \lambda_2,\dots ,\lambda_k)$ для некоторого натурального $k$. Пусть также $|\lambda| = \sum\limits_{i=1}^k \lambda_i$. Набор $\lambda$ называется {\it разбиением,} если $\lambda_1\geqslant \lambda_2\geqslant\dots \geqslant\lambda_k.$

{\it Функция Шура} $S_\lambda$ равна $S_\lambda = \det(h_{\lambda_i + j-i})_{1\leqslant i, j\leqslant k}.$

Пусть $\lambda$ -- разбиение и на $\{x_i\}$ введён порядок $x_1<x_2<\dots <x_i<\dots$ Тогда коэффициент перед мономом $P$ степени $|\lambda|$ в $S_\lambda$ есть количество диаграмм Юнга формы $(\lambda_1, \lambda_2,\dots ,\lambda_k)$ с произведением элементов, равным $P.$

В работе \cite{Ges90} определяются функции $b_i = \sum\limits_{n=0}^{\infty}{x^{2n + i}\over n!(n+i)!}$ и $U_k = \det(b_{\mid i-j\mid})_{1\leqslant i, j\leqslant k}.$

Также вводится функция $R_k(x, y)$ как $R_k(x, y) = \sum\limits_k s_\lambda(x) s_\lambda(y)$, где сумма берётся по всем разбиениям на не более чем $k$ частей. Тогда коэффициент при $x_1 x_2\dots x_n y_1 \dots y_n$ в функции $R_k(x, y)$ равен $\xi_k(n).$ Из этого в \cite{Ges90} выводится, что $U_k = \sum\limits_{n=0}^{\infty}\xi_k(n){ x^{ 2n}\over (n!)^2}.$

Количество полилинейных слов длины $n$ над $l$-буквенным алфавитом $(n \leqslant l)$, в каждом из которых не найдётся последовательности из $(k+1)$ буквы в
порядке лексикографического убывания есть $\bigl(\begin{smallmatrix}l\\ n \end{smallmatrix}\bigr)\xi_k(n)$.

Автор приносит благодарности А.Я. Белову, А.В. Михалёву, В.Н. Латышеву и всем участникам семинара ``Теория колец'' за постановку задачи и обсуждение работы.

\end{document}